\documentclass[12pt]{amsart}
\usepackage[dvipdfmx]{graphicx, graphics}
\usepackage{amssymb, amscd,latexsym, color}
\usepackage{amsmath,amsthm,amssymb, ascmac, daytime, braket}
\setlength{\topmargin}{-4.6truemm}
\textwidth 6.5in \textheight 8.5in \evensidemargin .05in \oddsidemargin .05in
\addtolength{\textheight}{20truemm}    

\newtheorem{thm}{Theorem}[section]
\newtheorem{lem}[thm]{Lemma}

\newtheorem{cor}[thm]{Corollary}
\newtheorem{ex}[thm]{Example}
\newtheorem{rmk}[thm]{Remark}

\begin{document}
\title []{Ribbon disks with the same exterior}
\author{Tetsuya Abe and Motoo Tange}
\subjclass[2010]{57M25, 57R65}
\keywords{Handle calculus (Kirby  calculus),  Osoinach's construction (annular twisting construction), ribbon disks, slice-ribbon conjecture}
\address{Ritsumeikan University,  
1-1-1 Noji-higashi, Kusatsu Shiga 525-8577,  Japan}
\email{tabe@fc.ritsumei.ac.jp}
\address{Institute of Mathematics,  University of Tsukuba, 1-1-1 Tennodai, Tsukuba, Ibaraki 305-8571, Japan}
\email{tange@math.tsukuba.ac.jp}
\maketitle

 \begin{abstract}
We  construct
an infinite family of  inequivalent slice disks with the same exterior,
which gives an affirmative answer to an old question asked by
Hitt and Sumners in 1981.
Furthermore, we prove that  these slice disks are  ribbon disks. 
\end{abstract}
\section{Introduction}
One of the most outstanding  problems in knot theory has been whether 
 knots are determined by their complements or not.
The celebrated theorem of Gordon and Luecke \cite{GL}
states that 
if the complements of two classical  knots in the $3$-sphere  $S^3$ are diffeomorphic,
then  these knots are equivalent.
(For more  recent results, see  \cite{Gainullin, Kegel, Ravelomanana}.)
For higher-dimensional knots,
there exist at most two inequivalent  $n$-knots  
 ($n \ge 2$) with diffeomorphic exteriors, see \cite{Browder, Gluck, LS}.
Examples of such   $n$-knots are given in \cite{CS} and \cite{Gordon}.
\vskip 2mm

We consider an analogous problem for slice disks,
that is,  smoothly and properly embedded  disks  in the standard 4-ball $B^4$.
The situation is quite different.
Let $X$ be the exterior of a slice disk, 
and define $\zeta(X)$ to be the number of inequivalent slice disks 
whose  exteriors are diffeomorphic to $X$,
where two slice disks  $D_1$ and $D_2$ are  equivalent
if there exists a diffeomorphism  $g\colon B^4 \to B^4$
such that $g(D_1)=D_2$. 
In 1981, 
Hitt and Sumners  \cite[Section~4]{HS1}  asked the following.

\vskip 2mm
\noindent
\textbf{Question 1.}
Is there a slice disk exterior $X$ with $\zeta(X)=+\infty$?
\vskip 2mm

It seems that  no essential progress has been  made to Question $1$  until recently.
One of the reasons is  that when we consider Question 1, 
we often encounter the smooth Poincar\'{e} conjecture in dimension four,
which is one of the most challenging unsolved problems.
In 2015,   Larson and  Meier \cite{LM} produced 
 infinite families of inequivalent homotopy-ribbon disks with homotopy equivalent exteriors,
which gives a partial answer to Question~1.
In this paper, we prove the following.

\begin{thm}\label{thm:ribbon_disks}
There is a sequence of slice disks $D_n  (n \ge 0)$ satisfying the following properties:
\begin{enumerate}
\item  The exterior of each slice disk $D_n$ is diffeomorphic to that of $D_0$.
\item The knots $\partial D_n$ are  mutually inequivalent, therefore 
$D_n  $ are  mutually inequivalent.
\item$D_n$ is a ribbon disk.
\item  The knots $\partial D_n$ are obtained from $4_1 \# 4_1$ by the $n$-fold
annulus twist.
\end{enumerate}

\end{thm}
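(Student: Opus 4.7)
The plan is to realise the sequence $D_n$ via Osoinach's annulus twist applied to a carefully chosen ribbon presentation of $4_1 \# 4_1$. The key geometric input is an annulus $A \subset S^3 = \partial B^4$ whose two boundary components $c_1, c_2$ are unknots bounding disjoint disks $\Delta_1, \Delta_2$ in $B^4 \setminus \nu(D_0)$; this is the data that will allow each boundary annulus twist to extend to an ambient diffeomorphism of the slice-disk exterior.

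First I would produce $D_0$ explicitly as a ribbon disk for $4_1 \# 4_1$, using the ribbon presentation coming from amphichirality of $4_1$. A Kirby diagram for $X_0 := B^4 \setminus \nu(D_0)$ consists of the $0$-framed 2-handle on $K_0$ together with dotted 1-handles for the ribbon bands. Inside this diagram I would locate the annulus $A$ and verify that $c_1, c_2$ bound compressing disks $\Delta_1, \Delta_2$ disjoint from $D_0$. Ribbonness of $D_0$ is immediate from the presentation, giving property (3) at $n=0$.

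Next, set $K_n = \partial D_n$ to be the knot obtained from $K_0$ by the $n$-fold annulus twist along $A$, which is property (4) by definition. To prove (1), I would realise the annulus twist as a pair of $\pm 1/n$ Dehn surgeries along $c_1$ and $c_2$, and then execute Kirby calculus in $X_0$: slide the 2-handle of $D_0$ over these surgery circles and cancel them against the disks $\Delta_1, \Delta_2$, recovering a handle diagram diffeomorphic to the original one. The same moves preserve the ribbon structure of the disk, which gives (3) for all $n$.

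Finally, for (2) I would distinguish the $K_n$ by a knot invariant that sees more than the $0$-surgery (the Alexander polynomial is insufficient, since $S^3_0(K_n)$ is independent of $n$ by (1)). Natural candidates are the hyperbolic volume of $S^3 \setminus K_n$, which typically grows under nontrivial annulus twists, or a Seifert genus bound derived from the twist region. The principal obstacle is the extension statement (1): the Kirby-calculus diffeomorphism $X_n \cong X_0$ requires $A$, the ribbon presentation of $D_0$, and the compressing disks $\Delta_1, \Delta_2$ to be chosen in tightly compatible fashion, and assembling this package of geometric data and verifying it by explicit handle moves is where the technical core of the theorem lies.
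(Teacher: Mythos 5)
Your overall skeleton (realize $\partial D_n$ by annulus twisting a ribbon presentation of $4_1\#4_1$ and prove $X_n\cong X_0$ by handle calculus in the disk exterior) is the same as the paper's, which defines the disks $D_n$ directly by a one-parameter family of handle diagrams of $B^4$ and removes the $n$ full twists from the diagram of $X_n$ using the standard move for a dotted circle paired with a $0$-framed $2$-handle. But two of the four statements are left with genuine gaps. First, ribbonness: your assertion that ``the same moves preserve the ribbon structure of the disk'' does not follow. The Kirby moves proving (1) produce a diffeomorphism of exteriors; they do not carry $D_n$ to a ribbon disk, and knowing that $X_n$ is diffeomorphic to a ribbon disk exterior does not a priori imply $D_n$ is ribbon --- that implication is precisely the paper's open Question 2, and the authors explicitly warn against it. The paper proves (3) by a separate argument: two isotopy lemmas for pushed-in disks in handle diagrams (one for a disk running over a $1$-handle, one for sliding over a $2$-handle and canceling a $1$/$2$-handle pair, which introduces full twists into the projected disk), which convert $D_n$ itself into a pushed-in immersed disk in $S^3$ with four ribbon singularities. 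Some argument of this kind, performed on the disk rather than on its exterior, is unavoidable, since (3) is the main new content of the theorem.

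Second, inequivalence of the knots $\partial D_n$: hyperbolic volume and a Seifert-genus bound are only candidates, not a proof. Note that $\partial D_0=4_1\#4_1$ is composite, ``typically grows'' is not an argument, and any invariant determined by the $0$-surgery (in particular the Alexander polynomial, as you observe) is constant in $n$ because the exteriors, hence their boundaries $S^3_0(\partial D_n)$, all agree. The paper settles (2) by citing Takioka's computation of Kawauchi's $\Gamma$-polynomial, whose span for $\partial D_n$ is $2n+4$; you would need an actual computation of some invariant not determined by the $0$-surgery. Finally, you yourself flag that the compatibility package needed for (1) (the annulus, the compressing disks, and the ribbon presentation) is unverified; in the paper this is discharged concretely because $D_n$ and its exterior are drawn explicitly, so the twist-removal move applies at once.
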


As an immediate corollary of (1) and (2) in Theorem \ref{thm:ribbon_disks}, we obtain the following.

\begin{cor}\label{cor:ribbon_disks}
There is a  slice disk exterior $X$ with $\zeta(X)=+\infty$.

\end{cor}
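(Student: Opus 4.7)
The plan is to take the corollary as a direct bookkeeping consequence of Theorem \ref{thm:ribbon_disks}, so there is essentially no proof beyond unpacking definitions. Concretely, I would set $X$ to be the exterior of the slice disk $D_0$ produced by Theorem \ref{thm:ribbon_disks}, and then verify that the family $\{D_n\}_{n \ge 0}$ witnesses $\zeta(X) = +\infty$ by quoting conditions (1) and (2) of the theorem.

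The verification breaks into two lines. First, by condition (1), the exterior of each $D_n$ is diffeomorphic to the exterior of $D_0$, which is $X$; so every $D_n$ contributes to the count defining $\zeta(X)$. Second, by condition (2), the disks $D_n$ are mutually inequivalent under diffeomorphisms of $B^4$, so no two of them should be identified in that count. Since $\{D_n\}_{n \ge 0}$ is an infinite family, this forces $\zeta(X) \ge \aleph_0$, hence $\zeta(X) = +\infty$ as the definition permits.

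There is no substantive obstacle at the corollary level; all of the difficulty is absorbed into Theorem \ref{thm:ribbon_disks}. In particular, the only delicate point one might worry about, namely that the $D_n$ really are pairwise inequivalent as embedded disks (and not merely that some invariant distinguishes them), is already built into clause (2), which explicitly records the inequivalence of the boundary knots $\partial D_n$ and hence of the disks themselves. Thus the proof of Corollary \ref{cor:ribbon_disks} amounts to little more than the single sentence \emph{``Take $X$ to be the exterior of $D_0$ in Theorem \ref{thm:ribbon_disks} and apply (1) and (2).''}
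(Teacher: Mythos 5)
Your proposal is correct and is exactly how the paper treats it: the corollary is stated as an immediate consequence of parts (1) and (2) of Theorem \ref{thm:ribbon_disks}, with $X$ the exterior of $D_0$ and the infinite family $\{D_n\}$ witnessing $\zeta(X)=+\infty$. No further comment is needed.
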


We do not know whether 
there exist 
 inequivalent  ribbon disks  with the same exterior
if  their boundaries  are equivalent.
However, there is a related result.
In 1991, Akbulut \cite{Akbulut} found an interesting pair of  ribbon disks using the
Mazur cork (see also  \cite[Subsection 10.2]{Akbulut2}).
Actually, 
he constructed a  pair of  ribbon disks  $E_1, E_2$ satisfying the following.
\vskip 2mm

\noindent
(1) The two knots $\partial E_1$  and  $\partial E_2$ coincide, that is, $\partial E_1=\partial E_2$.\\
(2) The two ribbon disks $E_1, E_2$ are NOT isotopic rel the boundary.\\
(3)  The two ribbon disks $E_1, E_2$ are equivalent, therefore, their exteriors are diffeomorphic.
 \vskip 2mm

\noindent
We will give  explicit pictures of the ribbon disks  which clarify the  symmetry
which comes from the Mazur cork
in the Appendix.\vskip 2mm

Here we give a historical remark on Question 1. 
In  \cite[Section 4]{HS1}, 
Hitt and Sumners   also asked  whether 
there exist  
 infinitely many higher-dimensional slice disks  with the same exterior
or not.
After pioneering work 
\cite{HS1, HS2, Plotnick},
 Suciu \cite{Suciu} proved that 
there exist infinitely many inequivalent $n$-ribbon disks  with the same exterior
for $n \ge 3$ in 1985, and Question $1$  remained open. \vskip 2mm

This paper is organized  as follows:
In Section 2, we recall some basic definitions.
In Section~3, we prove the first half of Theorem \ref{thm:ribbon_disks}.
In Section 4,   we prove  the latter  half of Theorem \ref{thm:ribbon_disks}.
In the Appendix, we will give  explicit pictures of Akbulut's ribbon disks  which clarify  the  symmetry.
\subsection*{Acknowledgments}
The first author was supported by JSPS KAKENHI Grant Number 25005998
and 16K17597.
This paper was partially written  during
his stay at KIAS in March 2016. 
He  deeply thanks  Min Hoon Kim and  Kyungbae Park
for their hospitality at KIAS.
The second  author was  supported by  JSPS KAKENHI Grant Number 24840006 and 26800031.
We thank  Kouichi Yasui for encouraging us to write this paper, 
Hitoshi Yamanaka for  careful reading of a draft of this paper and valuable comments,
Selman Akbulut for giving us a useful comment on his example of ribbon disks, and 
the referees for valuable comments.
\section{Basic definitions}\label{Section_def}

In this short section, we recall some basic definitions and background.
\vskip 2mm

 We define a \textit{slice disk} to be a smoothly and properly embedded disk $D  \subset B^4$, and  the boundary of $D$,  $\partial D \subset S^3$,   is called a \textit{slice knot}.
A knot $R \subset S^3$ is called \textit{ribbon} if it bounds an immersed disk $\Delta \subset S^3$ with only ribbon singularities.
For the definition of a ribbon singularity, see the left picture of Figure \ref{ribbon_singularity}.
By pushing the interior of $\Delta$ into the interior of  $B^4$, we obtain a slice disk whose boundary is $R$, and 
the resulting slice disk is called a \textit{ribbon disk}. 
It is well known that this ribbon disk is uniquely determined by  $\Delta \subset S^3$  
up to isotopy.
The \textit{Slice-Ribbon Conjecture},  also known as the Ribbon-Slice Problem,
 states that 
every slice knot is  a ribbon knot, namely,
it always bounds a  ribbon disk.
Our work is partially motivated by 
this conjecture\footnote{Hass  \cite{Hass} proved that   a slice disk is a ribbon disk 
if and only if it is isotopic to a minimal disk in $B^4$,
 see also \cite[Appendix B]{HKL}, \cite[p450]{Allard}.
Therefore the Slice-Ribbon Conjecture   might be   solved (affirmatively or negatively)
using geometric analysis or geometric measure theory.}.
Here we give an example of a ribbon knot.

\begin{ex}

The knot in the middle picture of Figure  \ref{ribbon_singularity}  is a ribbon knot since 
it bounds an immersed disk $\Delta \subset S^3$ as in the 
right picture of Figure \ref{ribbon_singularity}.

\end{ex}

\begin{figure}[htp!]
\includegraphics[width=1\textwidth]{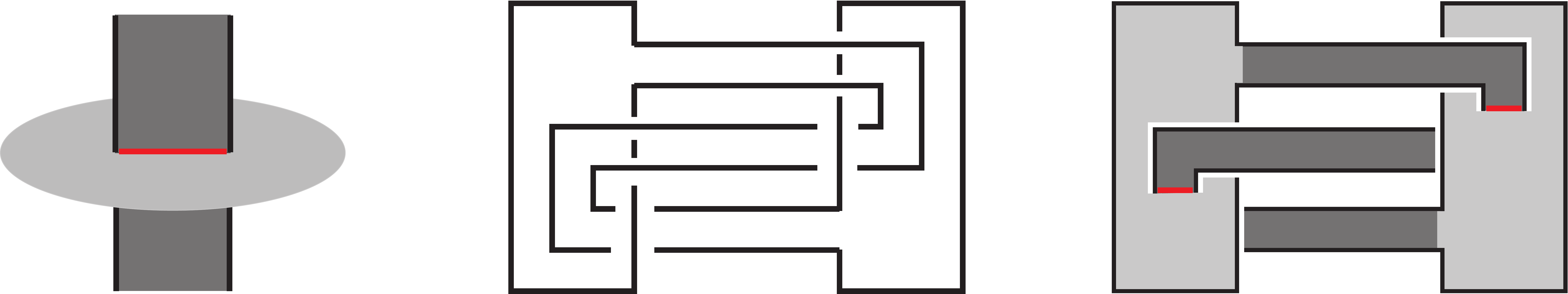}
\caption{A ribbon singularity (colored red), a ribbon knot, and  an immersed disk $\Delta \subset S^3$.}
\label{ribbon_singularity}
\end{figure}

Two knots $K_1, K_2$  are \textit{equivalent}
if there exists a diffeomorphism  $f\colon S^{3} \to S^{3}$
such that $f(K_1)=K_2$.
The \textit{exterior} of a knot $K$  is the $3$-manifold $S^{3} \setminus \nu(K)$,
where $\nu(K)$ is an open  tubular neighborhood of $K$ in $S^{3}$.
It is  unique up to diffeomorphisms,
and its interior is  diffeomorphic to 
the complement of $K$.
Similarly two slice disks  $D_1, D_2$ are \textit{equivalent}
if there exists a diffeomorphism  $g\colon B^4 \to B^4$
such that $g(D_1)=D_2$, and 
the \textit{exterior} of a slice disk  $D$  is the $4$-manifold 
$B^4 \setminus \nu(D)$, where $\nu(D)$ is an open  tubular neighborhood of $D$ in $B^4$.
Note that two (ambient) isotopic   slice disks  $D_1$  and $D_2$ are equivalent.

\section{Proof of the first half of Theorem \ref{thm:ribbon_disks}}

We prove the first half of Theorem \ref{thm:ribbon_disks},
that is, the statements (1) and (2) in Theorem \ref{thm:ribbon_disks}.
\vskip 3mm

Throughout  this paper,  we only consider  a 2-handlebody $HD$ which consists  of a $0$-handle, 1-handles,
and $2$-handles.
Also, note that the handle diagram of  $HD$ is drawn in the boundary of the $0$-handle.

\begin{proof}[Proof of the first half of Theorem \ref{thm:ribbon_disks}]
\begin{figure}[htp!]
\centering
\includegraphics[width=0.9 \textwidth]{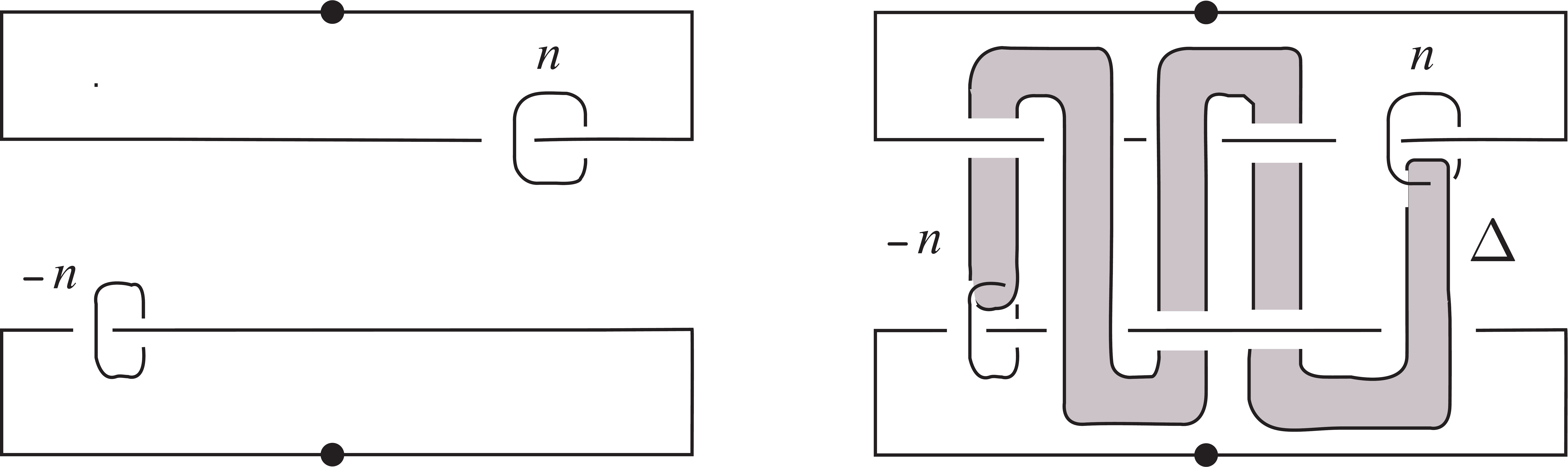}
\caption{A handle diagram of $B^4$ and the definition of the slice disk $D_n$.
}
 \label{ribbon_disks3}
\end{figure}
We consider the handle decomposition  of $B^4$ given by 
the handle diagram  in the  left picture in Figure  \ref{ribbon_disks3}.
Let $D_n$  be the slice disk obtained from the disk  $ \Delta$ in  the  right picture in Figure  \ref{ribbon_disks3}
by pushing the interior of $ \Delta$ into the interior of the $0$-handle,
and $X_n$ the exterior of $D_n$.

We will prove the statement (1) :  $X_n$  is diffeomorphic to $X_0$.
By the definition of dotted circles,  
$X_n$ is represented by the picture in 
Figure  \ref{fig:exterior} (after an isotopy).
For the dotted circle notation, 
see  \cite{Akbulut, GS}.
Then $X_n$    is diffeomorphic to $X_0$, which
 follows from the well-known fact that
\begin{figure}[ht!]
\centering
\includegraphics[width=0.4 \textwidth]{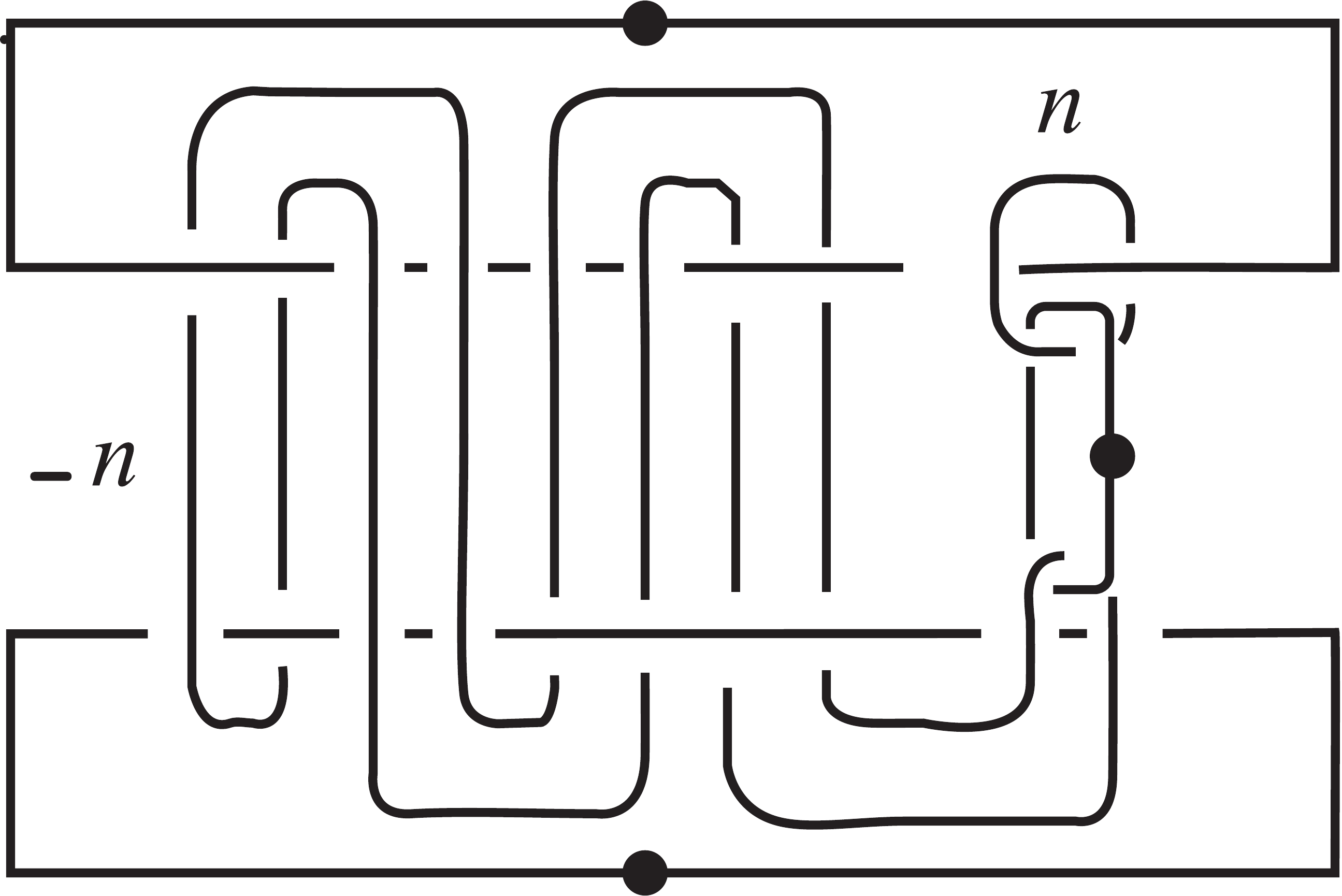}
\caption{A handle diagram of $X_n$.}
 \label{fig:exterior}
\end{figure}
 two handle diagrams that differ locally as in Figure  \ref{fig:Tips} are related by 
a sequence of handle moves,
see \cite{Akbulut, GS}. 
\begin{figure}[ht!]
\centering
\includegraphics[width=0.6 \textwidth]{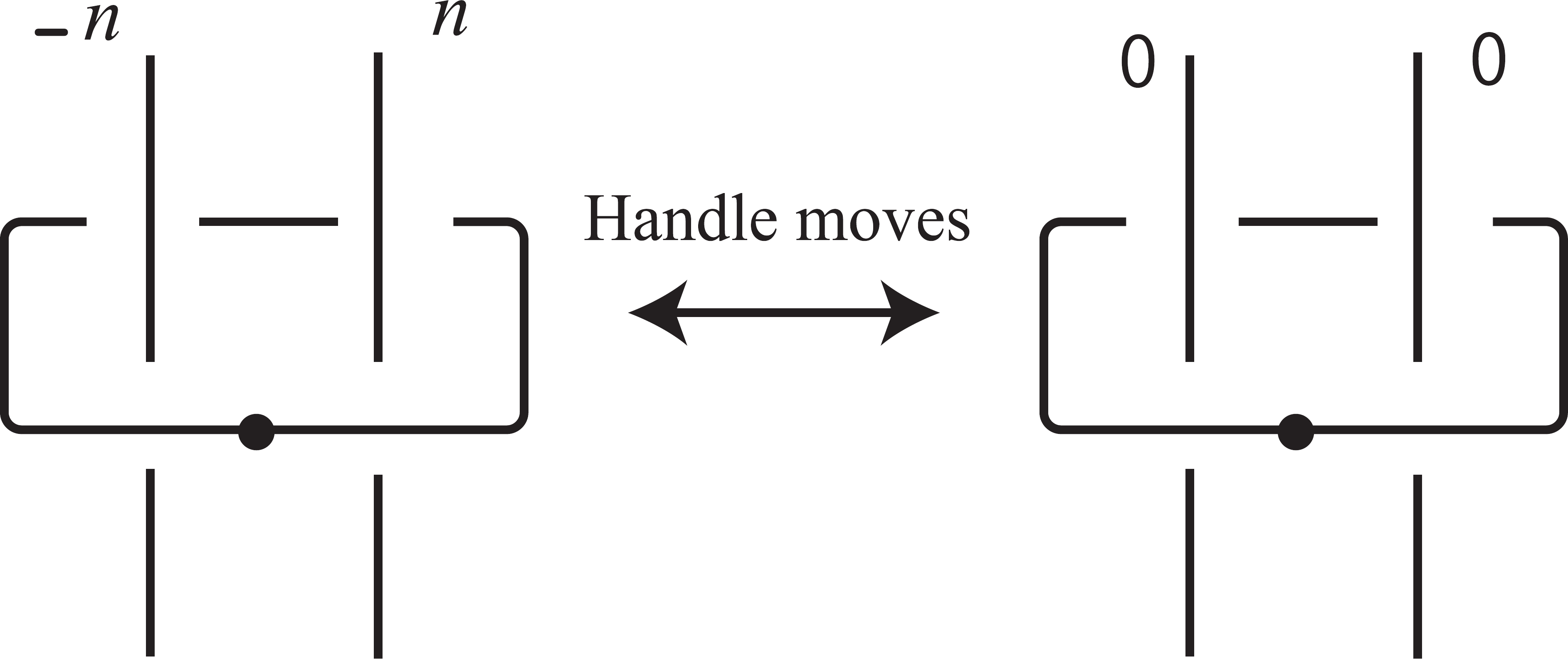}
\caption{Two handle diagrams related by a sequence of handle moves.}
 \label{fig:Tips}
\end{figure}

Recently, Takioka \cite{Takioka} calculated the $\Gamma$-polynomial\footnote{
This is a  polynomial invariant introduced by Akio Kawauchi in \cite{Kawauchi},
which is a specialization of the Homflypt polynomial.
This polynomial is independent of the Alexander polynomial  and the Jones polynomial,
and there is a polynomial time complexity algorithm for computing  the $\Gamma$-polynomial,
 see \cite{Przytycki}. }
 of the knots $\partial D_n$.
In particular,  he showed that the span of 
the $\Gamma$-polynomial of $\partial D_n$ is $2n+4$ and proved that 
the knots $\partial D_n$ are mutually inequivalent for $n \ge 0$,
which implies the statement (2). 
\end{proof}

\begin{rmk}
It is straightforward to see that 
the knot $\partial D_0$ is $4_1 \# 4_1$, that is,
the connected sum of two figure-eight knots.
We can also prove that $\partial D_n$ is isotopic to $\partial D_{-n}$ (by using  the symmetry of $4_1 \# 4_1$).
\end{rmk}

\begin{rmk}
It is not difficult to see that $X_n$ is diffeomorphic  to  the exterior of the   ribbon disk represented by
the  picture in Figure \ref{fig:exterior3}
(see  \cite[Subsection 1.4] {Akbulut} or \cite[Subsection 6.2]{GS}).
However, this fact does not a priori imply that $D_n$ is a ribbon disk.
In our case, $D_n$ is   a ribbon disk as proven later.

\begin{figure}[ht!]
\centering
\includegraphics[width=0.45 \textwidth]{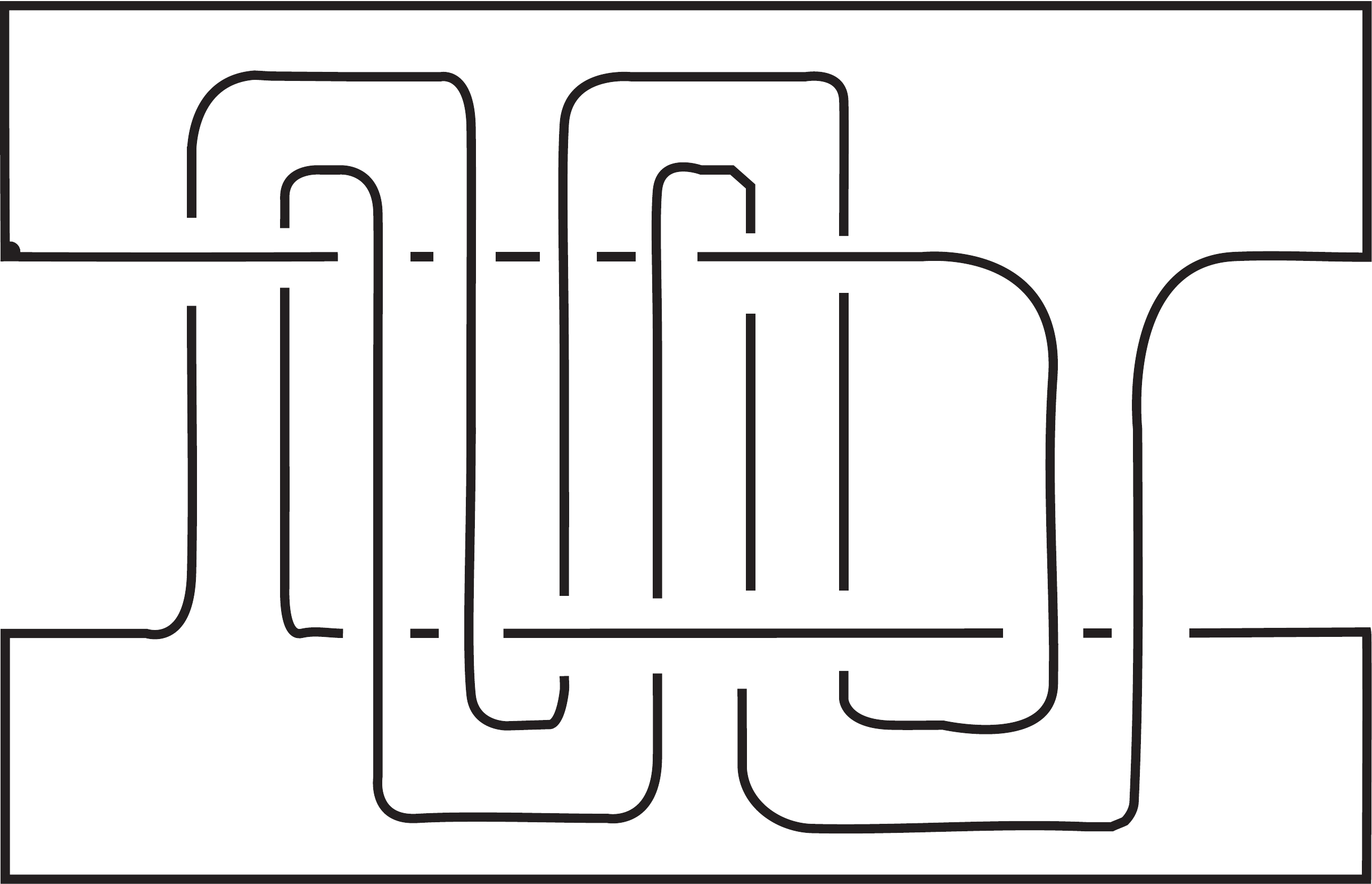}
\caption{A ribbon knot, which determines an obvious  ribbon disk.}
 \label{fig:exterior3}
\end{figure}

\end{rmk}
We conclude this section by asking the following question.
\vskip 2mm

\noindent
\textbf{Question 2.}
Let $D$ be a slice disk whose exterior is diffeomorphic to 
a ribbon disk  exterior.
Then is $D$ a ribbon disk ?

\section{Proof of the latter  half of Theorem \ref{thm:ribbon_disks}}\label{Section_ribbon}

In this section,  
we observe  Lemmas \ref{lem:operation1} and  \ref{lem:operation2}
which  are important when we deal with ribbon disks in terms of handle diagrams of $B^4$,
and prove  the latter  half of Theorem \ref{thm:ribbon_disks}.\\

\begin{lem}\label{lem:operation1}
Let $F$ and $F'$  be the two  smooth  disks in
the handle diagram of (a handle decomposition of)  $B^4$   which locally differ  as shown in Figure~\ref{fig:operation1},
 and $D$ and $D'$ be  the slice disks  obtained by pushing the interiors of $F$ and $F'$ 
 into the interior of the 0-handle.
 Then  $D$ and $D'$ are   (ambient)   isotopic in $B^4$.
\begin{figure}[htp!]
\includegraphics[width=0.85\textwidth]{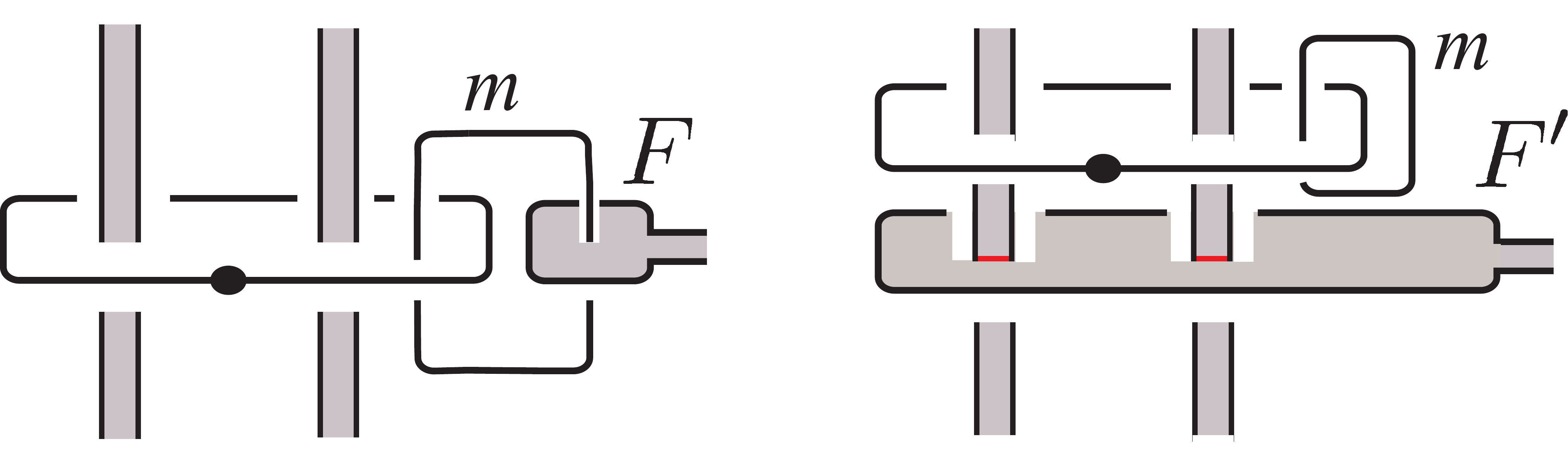}
\caption{Two smooth disks  $F$ and $F'$ in
the handle diagram of $B^4$.}
\label{fig:operation1}
\end{figure}
\end{lem}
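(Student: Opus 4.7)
The plan is to construct an explicit ambient isotopy of $B^4$ taking $D$ to $D'$, supported in a small neighborhood of the region where $F$ and $F'$ differ. Outside that region, $F$ and $F'$ agree, so the pushed-in disks $D$ and $D'$ automatically agree there, and the entire problem is local. The underlying principle is that pushing the interiors of the disks into the $0$-handle gives a radial coordinate $r \in [0,1]$ along which to maneuver, so that any local move of the disk that is obstructed at the level $r=1$ of the $3$-dimensional handle diagram becomes unobstructed slightly deeper in $B^4$ (at smaller $r$).

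First, I would fix a collar $(1-\varepsilon, 1] \times S^3 \subset B^4$ of the boundary of the $0$-handle and arrange that the push-in maps of $F$ and $F'$ agree on the common part. Inside the local region, $D$ is a small embedded disk patch in a $4$-ball neighborhood $U$ of the corresponding portion of the handle diagram, and similarly for $D'$; the task reduces to exhibiting a smooth isotopy between these two patches in $U$ relative to their common boundary on $\partial U$.

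Next, I would realize the isotopy by first pushing the patch of $D$ still further into $U$ along the radial direction, then performing the local modification indicated in Figure~\ref{fig:operation1} in a region where the strands of the handle diagram no longer interfere, and finally pushing back up to match $D'$. The extra radial room is precisely what makes the move possible: the $1$- and $2$-handle data are concentrated at $r=1$, so at any positive depth the move reduces to an isotopy of a disk patch inside an open $4$-ball, unobstructed by the rest of the handlebody structure.

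The main obstacle will be verifying that the moving patch remains disjoint from the other strands of $D$ and from the handle cores throughout the isotopy. I would handle this by a careful choice of depths: strands of the push-in that locally pass through or near the region are sent to distinct radial levels, so that the moving patch slides past them in the extra dimension rather than running into them. Once these depths are fixed, checking the isotopy becomes a routine verification in an explicit local model $B^4$ for the picture in Figure~\ref{fig:operation1}.
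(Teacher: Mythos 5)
Your proposal is correct and matches the paper's argument in substance: the paper's proof consists precisely of a picture (Figure~\ref{fig:PROOF_ribbon_sing_in_HD}) exhibiting a local ambient isotopy between $D$ and $D'$, in which the pushed-in sheets sit at different radial depths in the $0$-handle and therefore slide past each other where the projections $F$ and $F'$ differ. You have simply described verbally, with the collar/depth bookkeeping made explicit, what the paper presents pictorially.
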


\begin{proof}
See Figure \ref{fig:PROOF_ribbon_sing_in_HD}. 
\end{proof}
\begin{figure}[htp!]
\includegraphics[width=0.89\textwidth]{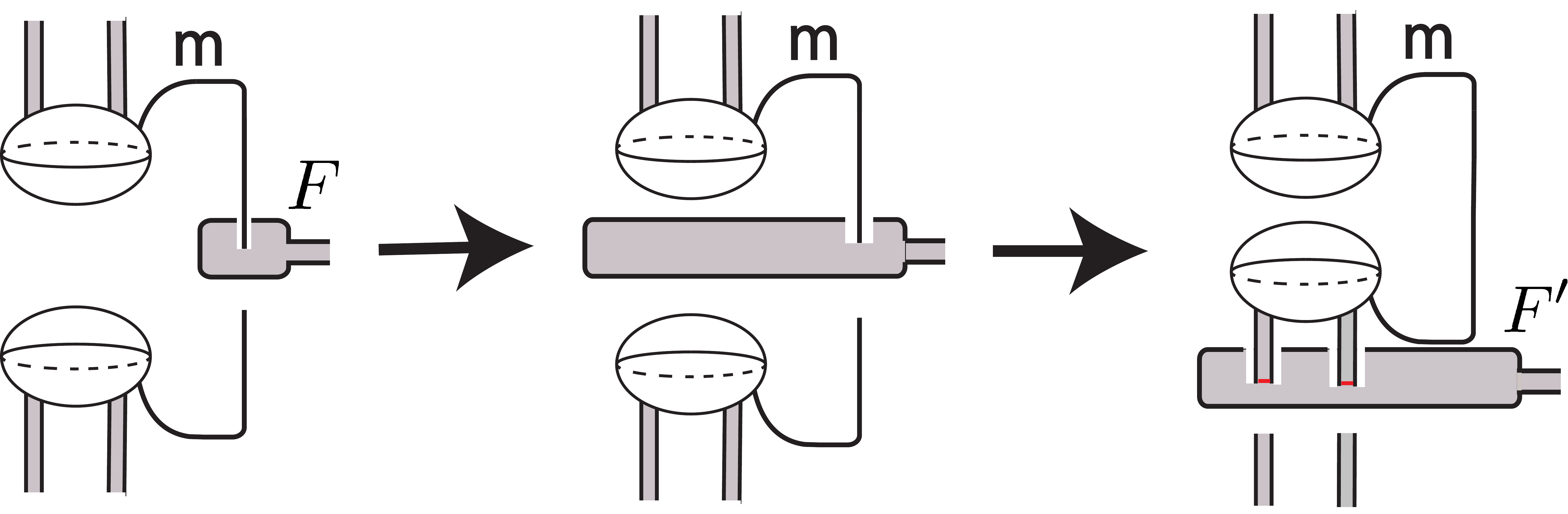}
\caption{An isotopy between  $D$ and $D'$ which are projected to
$F$ and $F'$, respectively.}
\label{fig:PROOF_ribbon_sing_in_HD}
\end{figure}

\begin{lem}\label{lem:operation2}
Let $F$ and $F'$  be two  smooth  disks
in handle diagrams  of $B^4$ which locally differ  as shown in Figure~\ref{fig:operation3},
and $D$ and $D'$ be  the slice disks  obtained by pushing the interiors of $F$ and $F'$ 
 into the interior of the 0-handle.
 Then  $D$ and $D'$ are (ambient) isotopic in $B^4$.
\begin{figure}[htp!]
\includegraphics[width=0.75\textwidth]{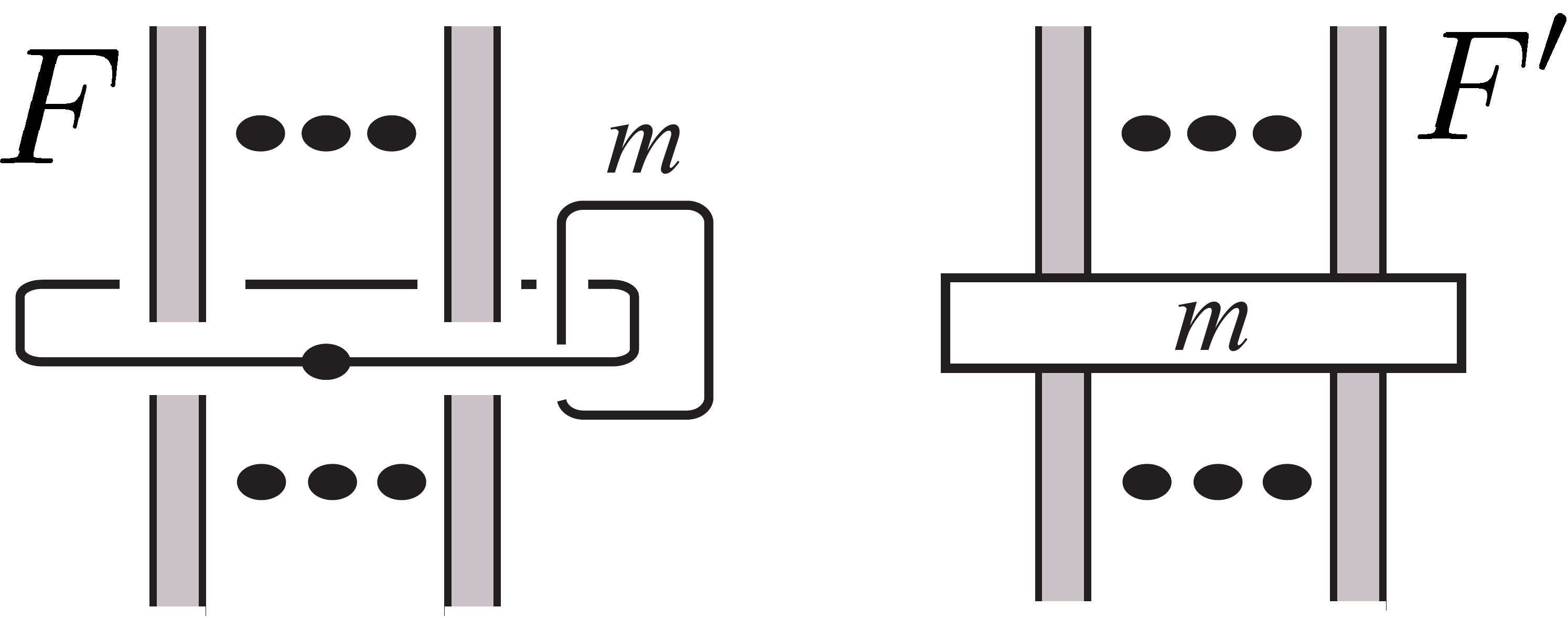}
\caption{Two disks $F$ and $F'$. Here the box named $m$
means the $m$-full twists. }
\label{fig:operation3}
\end{figure}
\end{lem}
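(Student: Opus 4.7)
The plan is to realize the modification in Figure \ref{fig:operation3} as an explicit ambient isotopy of $B^4$, in direct analogy with the proof of Lemma \ref{lem:operation1}. As in that lemma, the result should in the end be presented as a sequence of local pictures, but the geometric content rests on a single observation about how the dotted 1-handle sits in $B^4$.

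First, I would recall (see \cite{Akbulut, GS}) that the dotted circle represents an unknotted, properly embedded 2-disk $\delta \subset B^4$ whose boundary is the dotted circle in $S^3 = \partial B^4$. A tubular neighborhood $\nu(\delta) \cong D^2 \times D^2$ has $\delta$ as its zero section, and the two strands visible inside the box labeled $m$ in Figure \ref{fig:operation3} correspond, after pushing the interior of $F$ (or $F'$) into the 0-handle, to two arcs of $D$ that transversally pierce $\delta$.

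The key step is to show that a full $2\pi$-rotation of the $D^2$-fiber of $\nu(\delta)$ extends to an ambient isotopy $\Phi$ of $B^4$, supported in a slightly enlarged neighborhood of $\delta$ by means of a bump function. Under $\Phi$, any two arcs transversally piercing $\delta$ acquire an extra full twist relative to each other, while every point of $B^4$ outside the support of $\Phi$ is unmoved. Iterating $\Phi$ a total of $m$ times (or using $\Phi^{-1}$ when $m<0$) then realizes, as an ambient isotopy of $B^4$, the removal of the $m$ full twists that distinguish $F$ from $F'$, thereby carrying $D$ to $D'$.

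The main obstacle is to confirm that this locally-defined twist isotopy really describes the honest change between the slice disks $D$ and $D'$ in the interior of $B^4$, rather than merely a change between the planar diagrams of $F$ and $F'$. This is handled in the same manner as in Lemma \ref{lem:operation1}: one chooses the push-in of $F$ so that, inside $\nu(\delta)$, it coincides with the evident product structure along the piercing arcs. The isotopy $\Phi$ then extends canonically over all of $D$, and since it is the identity outside the boxed region of Figure \ref{fig:operation3}, the rest of the disk is unaffected, completing the argument.
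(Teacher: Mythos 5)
There is a genuine gap: your argument never engages the $2$-handle that appears in the left picture of Figure~\ref{fig:operation3}. The content of the lemma is not ``strands through a dotted circle with $m$ full twists versus without''; it is the trading of a cancelling $1$-handle/$2$-handle pair for full twists on the strands of the disk (the $2$-handle and its framing are what produce or absorb the $m$ twists). The paper's proof is correspondingly a handle-calculus argument: one first slides $D$ over the $2$-handle --- this is an honest ambient isotopy of the disk inside the fixed $B^4$ --- after which nothing runs over the $1$-handle, the $1/2$-pair is cancelled, and the resulting picture is exactly $F'$. A proof that never mentions the $2$-handle cannot establish this statement, and the substitute you propose is not correct.

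Concretely, the isotopy $\Phi$ you describe does not have the stated effect. A fiberwise $2\pi$-rotation of $\nu(\delta)\cong D^2\times D^2$, damped so as to be supported in the interior, acts within each normal fiber separately, so two arcs meeting $\delta$ in different fibers acquire no relative twisting; moreover, any diffeomorphism of $B^4$ supported in a neighborhood of $\delta$ meets $S^3=\partial B^4$ only inside the solid torus $\nu(\partial\delta)$, hence fixes pointwise the strands that pass through the spanning disk of the dotted circle away from the dotted circle itself, and so cannot insert or delete the twists in the box. The twist that genuinely adds a full twist to strands running through a dotted circle is the loop-of-rotations twist of $B^4\setminus\nu(\delta)\cong S^1\times B^3$; but that is a self-diffeomorphism of the carved exterior, not an ambient isotopy of the pair $(B^4,D)$ --- it is precisely the mechanism behind Figure~\ref{fig:Tips} and statement (1) of Theorem~\ref{thm:ribbon_disks}. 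Indeed, if $m$ full twists through a dotted circle could always be removed by an ambient isotopy of $B^4$ carrying the pushed-in disk along, the knots $\partial D_n$ would all be isotopic, contradicting statement (2) of Theorem~\ref{thm:ribbon_disks}. So the slide over the $2$-handle and the cancellation in the paper's proof are not cosmetic; they are the entire point of the lemma.
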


\begin{proof}
After sliding  $D$ over the 2-handle in the left picture of Figure \ref{fig:operation3}, 
we cancel the 1/2-handle pair.  
Then we obtain the slice disk $D'$ which are projected to $F'$, see the right picture of Figure \ref{fig:operation3}.
\end{proof}
We are ready to prove   the latter half of Theorem \ref{thm:ribbon_disks}.

\begin{proof}[Proof of the latter half of Theorem \ref{thm:ribbon_disks}]

Recall that  the slice disk $D_n$ is obtained  by 
pushing the interior of  the disk $\Delta$ in the left picture of
Figure  \ref{ribbon_disks4} into the interior of  the 0-handle. 
By Lemmas \ref{lem:operation1} and \ref{lem:operation2},
$D_n$ is isotopic to the slice disk $D_n'$ which is projected 
to  the smooth disk  $\Delta'$ in the right picture of
Figure  \ref{ribbon_disks4}.
This implies that $D_n$ is a ribbon disk, and we have the statement (3).
Here we note that the smooth disk $\Delta'$ has four ribbon singularities, however,
we do not draw the whole picture of $\Delta'$ for simplicity.

\begin{figure}[htp!]
\vskip -90mm
\centering
\includegraphics[width=0.95 \textwidth]{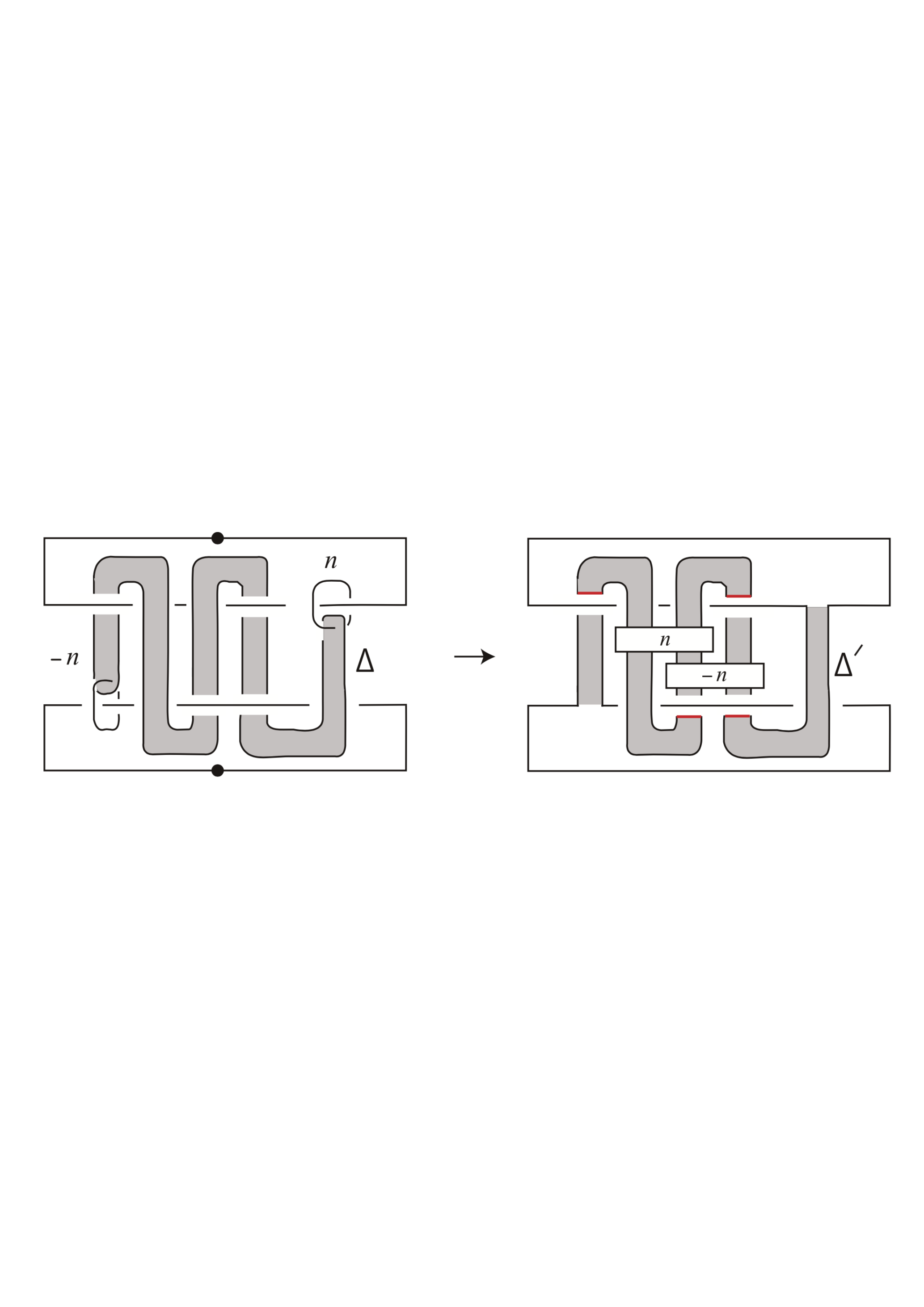}
\vskip -92mm

\caption{$\Delta'$  is an immersed disk with four ribbon singularities.
}

 \label{ribbon_disks4}
\end{figure}

The right picture of Figure  \ref{ribbon_disks4} tells us that 
the knot $ \partial D_n$ is isotopic to that in Figure  \ref{ribbon_disks6}.
\begin{figure}[htp!]
\centering
\includegraphics[width=0.5 \textwidth]{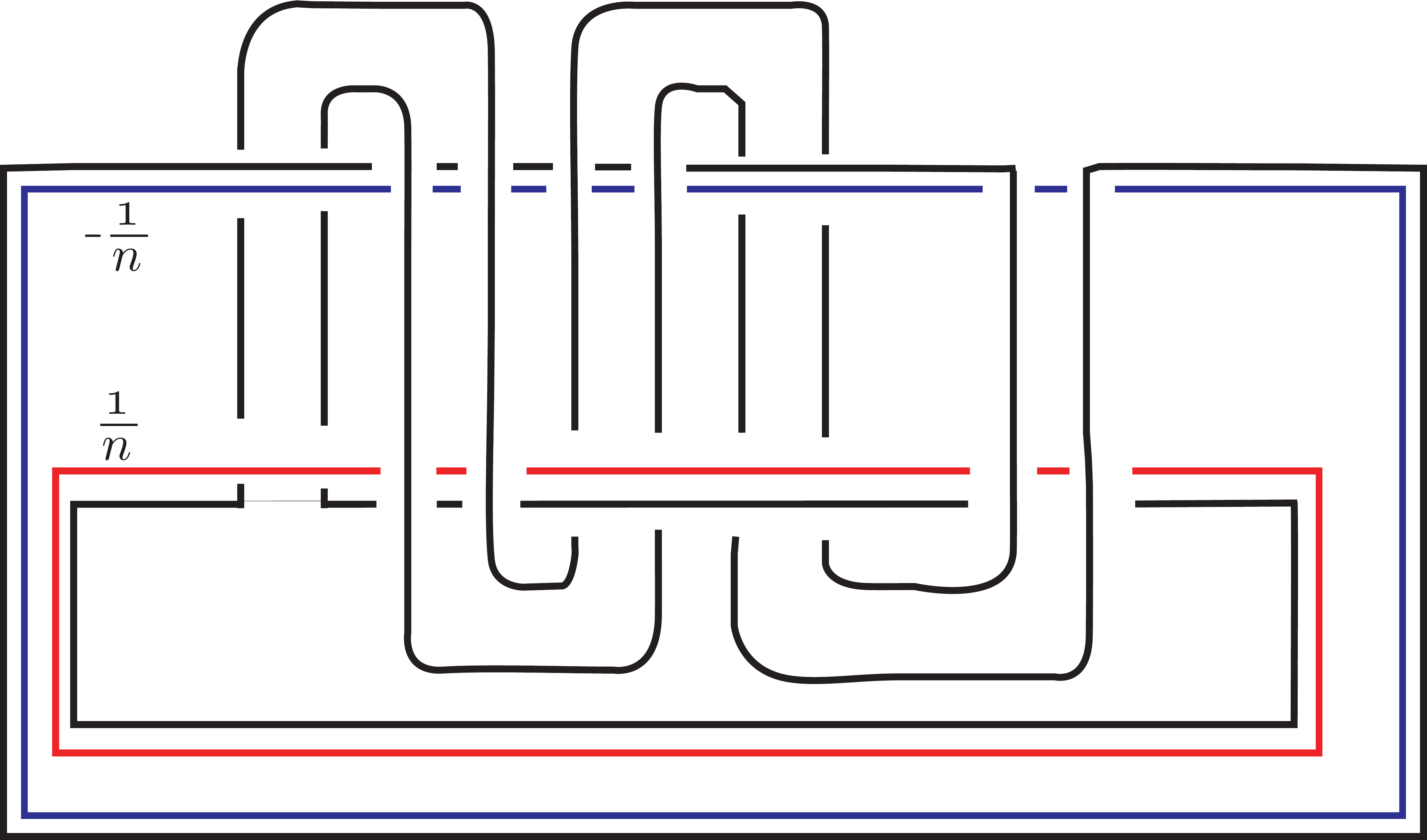}
\caption{}
 \label{ribbon_disks6}
\end{figure}
This implies that $ \partial D_n$ is obtained from the ribbon knot $ \partial D_0=4_1  \# 4_1$
by $n$-fold annulus twist, see \cite{AT}.
We have now obtained  the statement (4).
\end{proof}

\begin{rmk}\label{rem:annulus}
It turns out that 
the  knots $ \partial D_n$ are the same as  those in  \cite[Figure 7]{Osoinach}, 
however, we omit the proof since this fact is not used in this paper.
\end{rmk}


\section*{Appendix}
Akbulut \cite{Akbulut} constructed a pair of  ribbon disks $E_1$ and $E_2$
as in Figure  \ref{ribbon-disk},
where ribbon disks are  specified by dashed arcs.
\begin{figure}[htp!]  \centering 
\includegraphics[width=0.6 \textwidth]{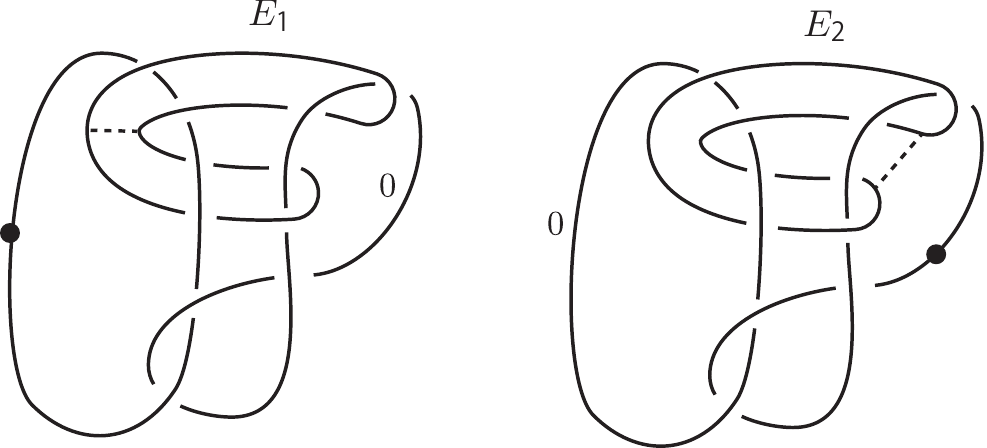}
\caption{A pair of ribbon disks $E_1$ and $E_2$.}  \label{ribbon-disk}
\end{figure}
In  \cite{Akbulut}, he  gave explicit pictures
of his ribbon disks; however, the symmetry which comes from the Mazur cork was not clear.
Here we give explicit pictures of $E_1$ and $E_2$ which clarify the  symmetry
as in Figure  \ref{anticork_ribbondisks}.
By the  symmetry, $E_1$ and $E_2$ are equivalent, hence  the ribbon disk exteriors are  diffeomorphic.
\begin{figure}[htp!] \centering
\includegraphics[width=0.75 \textwidth]{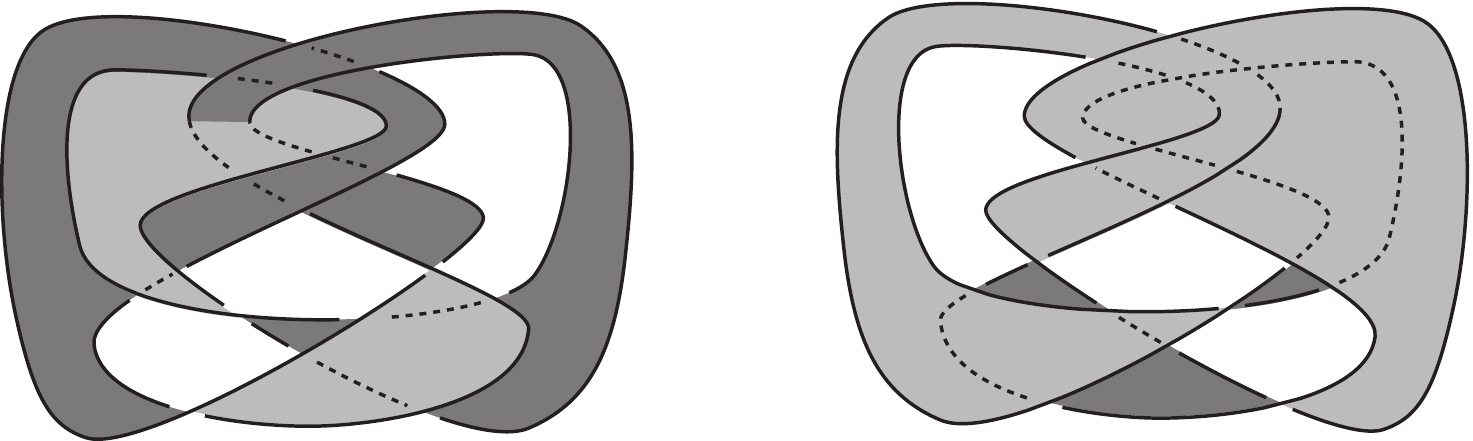} 
\caption{Explicit pictures of $E_1$ and $E_2$ which clarify the  symmetry.}  \label{anticork_ribbondisks}
\end{figure}
Figures \ref{isotopy} and \ref{isotopy2}  explain how to obtain 
explicit pictures of $E_1$ and $E_2$.

\begin{figure}[htp!]
\centering
\includegraphics[width=0.95 \textwidth]{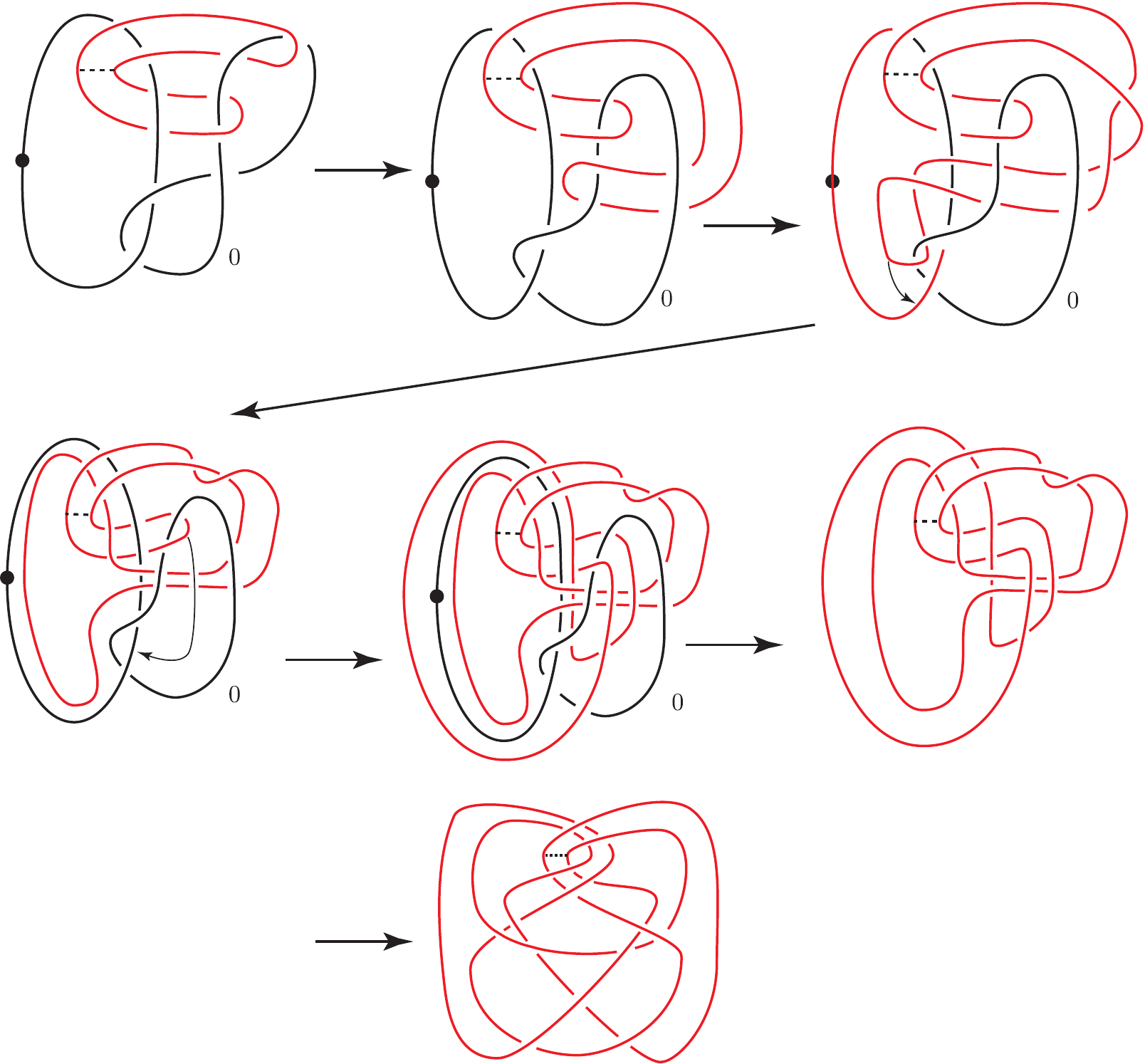}
\caption{An isotopy.}  \label{isotopy}
\end{figure}

\begin{figure}[htp!]
\vskip -70mm

\centering
\includegraphics[width=1 \textwidth]{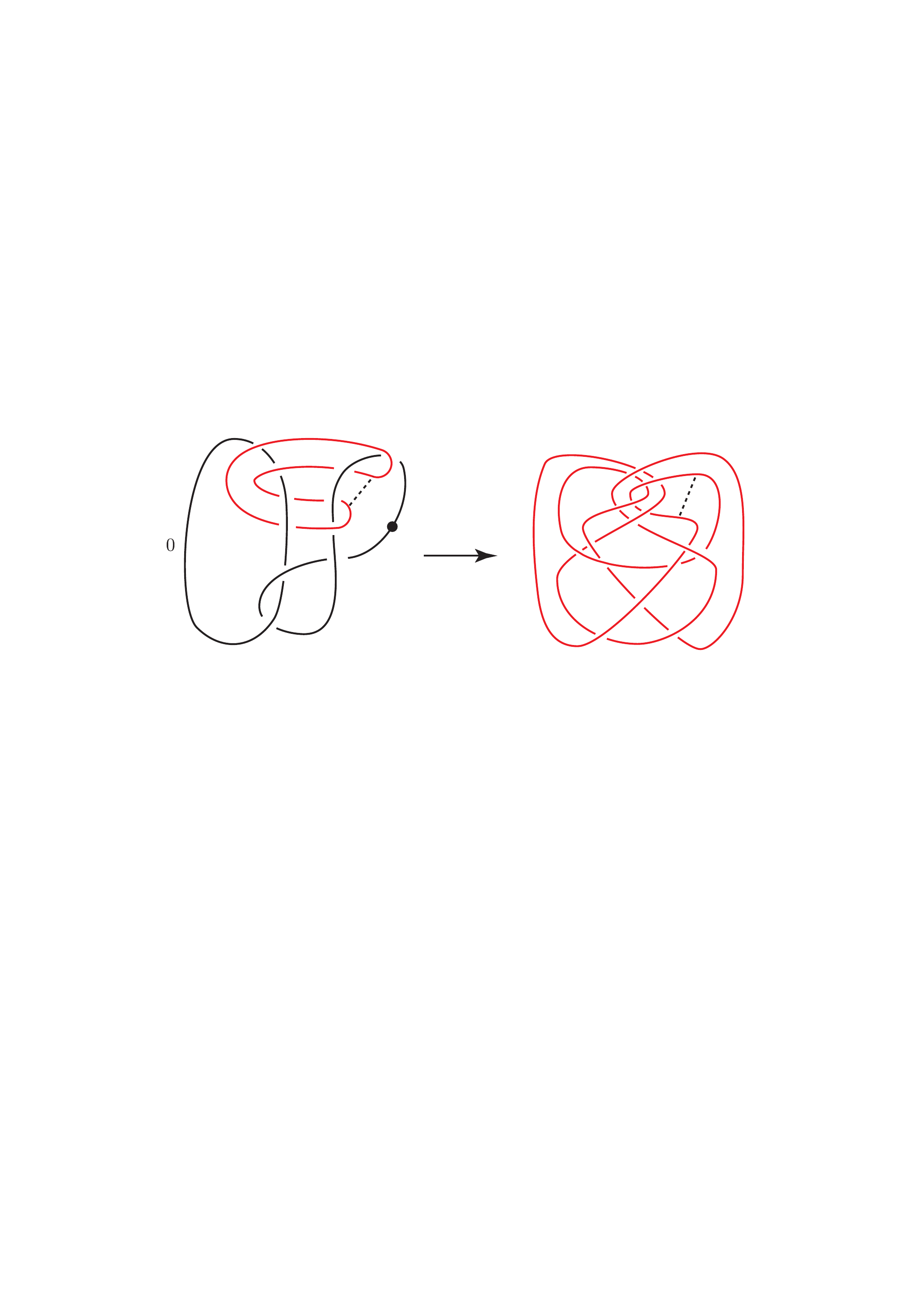}
\vskip -115mm
\caption{An isotopy.}  \label{isotopy2}

\end{figure}


%
%


\end{document}